\newtheorem{theorem}{Theorem}[section]
\newtheorem{lemma}[theorem]{Lemma}
\newtheorem{proposition}[theorem]{Proposition}
\numberwithin{equation}{section}
\begin{document}

\baselineskip=15.5pt

\title[Equivariant Quillen determinant bundle]{Realization of an equivariant
holomorphic Hermitian line bundle as a Quillen determinant bundle}

\author[I. Biswas]{Indranil Biswas}

\address{School of Mathematics, Tata Institute of Fundamental
Research, Homi Bhabha Road, Bombay 400005, India}

\email{indranil@math.tifr.res.in}

\subjclass[2000]{58J52, 14H60}

\keywords{Quillen determinant, equivariant bundle, fiber integral}

\date{}

\begin{abstract}
Let $M$ be an irreducible smooth complex projective variety equipped with an action
of a compact Lie group $G$, and let $({\mathcal L}\, ,h)$ be a $G$--equivariant
holomorphic Hermitian line bundle on $M$. Given a compact connected Riemann surface
$X$, we construct a $G$--equivariant holomorphic Hermitian line bundle
$(L\, ,H)$ on $X\times M$ (the action of $G$ on $X$ is trivial), such that
the corresponding Quillen determinant line bundle $({\mathcal Q}\, , h_Q)$,
which is a $G$--equivariant holomorphic Hermitian line bundle on $M$,
is isomorphic to the given $G$--equivariant holomorphic Hermitian line bundle
$({\mathcal L}\, ,h)$. This proves a conjecture in \cite{DM}.
\end{abstract}

\maketitle

\section{Introduction}

This work was inspired by \cite{DM} where the following result is proved.
Let $M$ be an irreducible smooth complex projective variety and $\mathcal L$
an ample line bundle on $M$ equipped with a Hermitian structure $h$ of positive
curvature. There is a natural family of Cauchy--Riemann operators on ${\mathbb C}
{\mathbb P}^1$, parametrized by $M$, such that the corresponding
Quillen determinant line bundle, which is a holomorphic Hermitian line bundle
on $M$, is holomorphically isomorphic to a positive tensor
power of $({\mathcal L}\, ,h)$ \cite[p. 785, Theorem 1.1]{DM}. It is conjectured
in \cite{DM} that an equivariant version also holds (see \cite[p. 793, \S~5]{DM}).

Let $M$ be as before. Assume that it is equipped with a
$C^\infty$ action of a compact Lie group $G$ via holomorphic automorphisms of
$M$. Let $({\mathcal L}\, ,h)$ be any $G$--equivariant
holomorphic Hermitian line bundle on $M$. 

Let $X$ be a compact connected Riemann surface equipped with a K\"ahler form
$\omega_X$. Let $L_0$ be a holomorphic line bundle on $X$ of degree
$\text{genus}(X)-2$ such that $H^0(X,\, L_0)\,=\, 0$. Fix a Hermitian structure
$h_0$ on $L_0$.

The action of $G$ on $M$ and the trivial action of $G$ on $X$ together produce
an action of $G$ on $X\times M$. Let $p_1$ and $p_2$ be the projections of $X\times
M$ on $X$ and $M$ respectively. Consider the Hermitian structure
$H\, :=\, (p^*_1 h_0)\otimes (p^*_2h)$ on the holomorphic line bundle
$$
L\, :=\, (p^*_1 L_0)\otimes (p^*_2{\mathcal L})
$$
over $X\times M$. The action of $G$ on $\mathcal L$ and the trivial action of $G$
on $L_0$ together produce an action of $G$ on $L$, thus making $L$ a $G$--equivariant
holomorphic Hermitian line bundle on $X\times M$. We will consider $(L\, ,H)$ as
a family of holomorphic Hermitian line bundles o $X$. Let $({\mathcal Q}\, , h_Q)$
be the Quillen determinant line bundle associated to the triple $(L\, ,\, H\, ,\omega_X)$.
It is a $G$--equivariant holomorphic Hermitian line bundle on $M$.

We prove the following (see Theorem \ref{thm1}):

\textit{The two $G$--equivariant holomorphic Hermitian line bundle on $M$, namely
$({\mathcal L}\, ,h)$ and $({\mathcal Q}\, , h_Q)$, are isomorphic.}

We note that this proves the earlier mentioned conjecture in \cite{DM}.

\section{A holomorphic family and its determinant bundle}

Let $X$ be a compact connected Riemann surface. Let $g$ be the genus of $X$.
Fix a holomorphic line bundle $L_0$ on $X$ of degree $g-2$ such that
\begin{equation}\label{e0}
H^0(X,\, L_0)\,=\, 0\, .
\end{equation}
We note that such a line bundle exists. Indeed, if $g\, \leq\,1$,
then any holomorphic line bundle of degree $g-2$ works; if $g\,=\,2$, then any
nontrivial holomorphic line bundle of degree zero works; if $g\, \geq\, 2$, then
any point outside the image of the Abel-Jacobi map $\text{Sym}^{g-2}(X)\,
\longrightarrow\, \text{Pic}^{g-2}(X)$ works. From Riemann--Roch it follows that
\begin{equation}\label{e1}
\dim H^1(X,\,  L_0)\, =\, 1\, .
\end{equation}
Fix a $C^\infty$ Hermitian structure
$h_0$ on $L_0$. Also, fix a K\"ahler form $\omega_X$ on $X$.

Let $M$ be a connected complex projective manifold (meaning a connected smooth
complex projective variety). Assume that a compact Lie group $G$ acts smoothly on
$M$ via holomorphic automorphisms. Let $({\mathcal L}\, ,h)$ be a $G$--equivariant
holomorphic Hermitian line bundle on $M$. This means that the holomorphic line
bundle $L$ is equipped with an action of $G$ such that
\begin{enumerate}
\item for each element $z\, \in\, G$, the action of $z$ on $\mathcal L$ is a
holomorphic automorphism of the line bundle $\mathcal L$ over the automorphism of
$M$ given by the action of $z$ on $M$,

\item the action of $G$ on $\mathcal L$ is $C^\infty$ and it preserves $h$.
\end{enumerate}

Let $p_1$ and $p_2$ be the projections on $X\times M$ to $X$ and $M$ respectively.
Consider the holomorphic line bundle
$$
L\, :=\, (p^*_1 L_0)\otimes (p^*_2{\mathcal L})\, \longrightarrow\, X\times M\, .
$$
It is equipped with the Hermitian structure
\begin{equation}\label{H}
H\, :=\, (p^*_1 h_0)\otimes (p^*_2h)\, .
\end{equation}
The action of $G$ on $M$ and the trivial action of $G$ on $X$ together define
an action of $G$ on $X\times M$. Similarly, the action of $G$ on $\mathcal L$ and the
trivial action of $G$ on $L_0$ together define an action of $G$ on $L$. This action
of $G$ on $L$ clearly preserves $H$.

Consider $(L\, ,H)$ as a family of holomorphic Hermitian line bundles o $X$ parametrized
by $M$. Let
$$
({\mathcal Q}\, , h_Q)\, \longrightarrow\, M
$$
be the Quillen determinant line bundle associated to $(L\, ,\, H\, ,\omega_X)$
\cite{Qu}. For any point $y\,\in\, M$, let $(L^y\, , H^y)$ be the
holomorphic Hermitian line bundle on $X$ obtained by restricting $(L\, , H)$
to $X\times\{y\}$. Note that $(L^y\, , H^y)$ is isomorphic to the
holomorphic Hermitian line bundle $(L_0\, ,h_0)$.
We recall that the fiber ${\mathcal Q}_y$ is identified with the complex line
$\bigwedge^{\rm top} H^0(X\, , L^y)^*\otimes \bigwedge^{\rm top} H^1(X,\,  L^y)$
\cite{Qu}. In view of \eqref{e0} and \eqref{e1}, the fiber ${\mathcal Q}_y$
is identified with $H^1(X,\,  L^y)$.

The action of $G$ on $L$ produces an action of $G$ on $\mathcal Q$.
The action of any $z\,\in\, G$ on ${\mathcal Q}$ is a holomorphic automorphism
of the line bundle ${\mathcal Q}$ over the automorphism of $M$ given by $z$.
The action of $G$ on ${\mathcal Q}$ preserves the Hermitian structure $h_Q$ on
$\mathcal Q$ because the action of $G$ on $L$ preserves $H$ and
the trivial action on $X$ preserves $\omega_X$.

Let
\begin{equation}\label{e5}
\xi\, :=\, M\times H^1(X,\,  L_0)\, \longrightarrow\, M
\end{equation}
be the holomorphically trivial line bundle with fiber $H^1(X,\,  L_0)$
(see \eqref{e1}). The trivial action of $G$ on $H^1(X,\,  L_0)$ and the action
of $G$ on $M$ together define an action of $G$ on $\xi$. The
actions of $G$ on ${\mathcal L}$ and $\xi$ together produce an action of $G$
on ${\mathcal L}\otimes \xi$ that is a lift of the action of $G$ on $M$.

\begin{lemma}\label{lem1}
The holomorphic line bundle $\mathcal Q$ over $M$ is identified with
${\mathcal L}\otimes \xi$. This identification is $G$--equivariant.
\end{lemma}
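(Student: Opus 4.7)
My plan is to construct the identification via a Künneth/projection formula argument on $X \times M$, checking that the resulting sheaf-theoretic isomorphism agrees with the fiberwise identification of $\mathcal{Q}$ already indicated in the excerpt.

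\textbf{Step 1: Fiberwise identification.} For every $y \in M$, the restriction $L^y = L|_{X\times\{y\}}$ is canonically isomorphic to $L_0 \otimes_{\mathbb{C}} \mathcal{L}_y$, where $\mathcal{L}_y$ denotes the one-dimensional fiber of $\mathcal{L}$ at $y$. Thus $H^0(X, L^y) = H^0(X, L_0) \otimes \mathcal{L}_y = 0$ by \eqref{e0}, and $H^1(X, L^y) = H^1(X, L_0) \otimes \mathcal{L}_y$. Combined with the identification of $\mathcal{Q}_y$ with $H^1(X, L^y)$ recorded in the excerpt, this gives an identification of the fiber $\mathcal{Q}_y$ with $(\mathcal{L} \otimes \xi)_y$.

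\textbf{Step 2: Promote to a holomorphic isomorphism.} To see that the fiberwise identification in Step 1 varies holomorphically with $y$, I would invoke the projection formula together with cohomology and base change. More precisely, since $p_2 \colon X \times M \to M$ is a proper smooth morphism, the projection formula gives a canonical isomorphism
$$
R^1(p_2)_* L \;=\; R^1(p_2)_*\bigl((p_1^*L_0) \otimes (p_2^*\mathcal{L})\bigr) \;\cong\; \bigl(R^1(p_2)_* p_1^*L_0\bigr) \otimes \mathcal{L},
$$
and base change (combined with \eqref{e1}) identifies $R^1(p_2)_* p_1^*L_0$ with the trivial bundle $\xi = M \times H^1(X, L_0)$. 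Since $H^0(X, L^y) = 0$ for every $y$, the $0$-th direct image vanishes and the Quillen determinant bundle $\mathcal{Q}$ coincides, as a holomorphic line bundle on $M$, with $R^1(p_2)_* L$. Composing the two canonical isomorphisms above yields a holomorphic isomorphism $\mathcal{Q} \cong \mathcal{L} \otimes \xi$ that restricts to the fiberwise identification of Step 1.

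\textbf{Step 3: $G$-equivariance.} The action of $G$ on $X \times M$ (trivial on $X$, given on $M$) lifts to $L$ and commutes with $p_2$, so it induces a $G$-action on $R^1(p_2)_*L$ which matches the $G$-action on $\mathcal{Q}$. The projection formula and the base change isomorphism are both natural with respect to morphisms of equivariant data, hence $G$-equivariant; the base change identification $R^1(p_2)_* p_1^*L_0 \cong \xi$ carries precisely the $G$-action defined above $\xi$ (trivial action on $H^1(X, L_0)$ combined with the action on $M$), because $G$ acts trivially on $X$ and on $L_0$. Composing, the isomorphism of Step 2 intertwines the $G$-actions on $\mathcal{Q}$ and on $\mathcal{L} \otimes \xi$, proving the lemma.

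The only genuine subtlety I expect is verifying that the canonical sheaf-theoretic isomorphism $\mathcal{Q} \cong R^1(p_2)_* L$ holds as holomorphic line bundles (not merely fiberwise): this rests on the fact that the Quillen determinant, in the situation where $H^0$ vanishes identically and $H^1$ has constant rank one, coincides with the top exterior power of the first direct image, which here is the first direct image itself. Once this identification is available, the projection formula and base change do the rest and the $G$-equivariance is formal.
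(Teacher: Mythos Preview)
Your argument is correct and matches the paper's own proof essentially line for line: the paper also deduces $R^0p_{2*}L=0$ from \eqref{e0}, applies the projection formula to write $R^1p_{2*}L\cong {\mathcal L}\otimes R^1p_{2*}(p_1^*L_0)=\mathcal{L}\otimes\xi$, identifies $\mathcal{Q}$ with $R^1p_{2*}L$, and then observes that the construction is $G$--equivariant. Your Step~1 and the closing remark about the Quillen determinant coinciding with the first direct image just make explicit what the paper leaves implicit.
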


\begin{proof}
{}From \eqref{e0} it follows that
$$
R^0p_{2*}L\, =\, 0
$$
(recall that $L^y$ is isomorphic to $L_0$). By the projection formula
\cite[p. 253, Ex.~8.3]{Ha}, we have
$$
R^1p_{2*}L\,=\, {\mathcal L}\otimes  R^1p_{2*}(p^*_1 L_0)\, .
$$
But $R^1p_{2*}(p^*_1 L_0) \,=\, \xi$. Therefore, we get an isomorphism
\begin{equation}\label{e4}
\tau\, :\, \mathcal Q\,:=\, {\rm Det}(L) \,=\, R^1p_{2*}L\,
\stackrel{\sim}{\longrightarrow}\, {\mathcal L}\otimes\xi\, .
\end{equation}
{}From the construction of $\tau$ it follows immediately that it intertwines
the actions of $G$ on $\mathcal Q$ and ${\mathcal L}\otimes\xi$.
\end{proof}

Let ${\nabla}^Q$ be the Chern connection on $\mathcal Q$ for the
Hermitian structure $h_Q$. The curvature of ${\nabla}^Q$ will be denoted
by ${\mathcal K}({\nabla}^Q)$. The curvature ${\mathcal K}({\nabla}^Q)$ can be
computing using \cite{Qu}, \cite{BGS}.

Let $\nabla^{\mathcal L}$ denote the Chern connection for $({\mathcal L}\, ,h)$.
Its curvature will be denoted by ${\mathcal K}(\nabla^{\mathcal L})$.

\begin{proposition}\label{prop1}
The two $(1,1)$--forms ${\mathcal K}({\nabla}^Q)$ and
${\mathcal K}(\nabla^{\mathcal L})$ on $M$ coincide.
\end{proposition}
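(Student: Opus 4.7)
The natural tool here is the Bismut--Gillet--Soul\'e curvature formula (see \cite{BGS}, \cite{Qu}), which refines Grothendieck--Riemann--Roch to the level of Chern forms: for the holomorphic family $(L\, ,H)$ over $p_2\colon X\times M\, \longrightarrow\, M$, it expresses the first Chern form of the Quillen metric as
\[
c_1({\mathcal Q}\, , h_Q)\, =\, -\int_{X\times M/M}\operatorname{Td}(T_{p_2}\, ,\omega_X)\operatorname{ch}(L\, ,H),
\]
the overall sign arising from the convention ${\mathcal Q}_y\, =\, \bigwedge^{\rm top}H^0(X\, ,L^y)^*\otimes\bigwedge^{\rm top}H^1(X\, ,L^y)$, which makes ${\mathcal Q}$ the inverse of the usual Knudsen--Mumford determinant line bundle. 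Here the integrand is built from $\omega_X$ and the Chern connection of $(L\, ,H)$, and the fiber integral retains only the degree--two component on $M$. The plan is to expand this integrand using the splitting $c_1(L\, ,H)\,=\, p^*_1c_1(L_0\, ,h_0) + p^*_2c_1({\mathcal L}\, ,h)$ coming from the product structure of $L$, and then evaluate the fiber integral.

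Write $\alpha\, :=\, c_1(L_0\, ,h_0)$ and $\gamma\, :=\, c_1(TX\, ,\omega_X)$ on $X$, and $\beta\, :=\, c_1({\mathcal L}\, ,h)$ on $M$. Since $X$ has complex dimension one, any wedge of two $(1,1)$--forms on $X$ vanishes, so $\alpha\wedge\gamma\, =\, \alpha^2\, =\, \gamma^2\, =\, 0$, and the relative Todd form collapses to $\operatorname{Td}(T_{p_2})\, =\, 1+\tfrac{1}{2}p^*_1\gamma$. Expanding $\operatorname{Td}\cdot\operatorname{ch}(L\, ,H)$ to total degree four on $X\times M$ leaves only the three terms $p^*_1\alpha\wedge p^*_2\beta$, $\tfrac{1}{2}p^*_2\beta^2$, and $\tfrac{1}{2}p^*_1\gamma\wedge p^*_2\beta$. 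Fiber integration along $X$ annihilates the middle term (its $X$--component has degree zero), and evaluates the others using $\int_X\alpha\, =\, \deg L_0\, =\, g-2$ and $\int_X\gamma\, =\, 2-2g$, producing the sum $\bigl((g-2)+(1-g)\bigr)\beta\, =\, -\beta$. Combined with the overall sign from the Quillen convention above, this gives $c_1({\mathcal Q}\, ,h_Q)\, =\, \beta\, =\, c_1({\mathcal L}\, ,h)$, and hence ${\mathcal K}(\nabla^Q)\, =\, {\mathcal K}(\nabla^{\mathcal L})$.

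The underlying computation is short; the main obstacle is the careful bookkeeping of signs and normalization factors, in particular reconciling the $(-1)^q$ in the Quillen convention, the $\sqrt{-1}/(2\pi)$ in the Chern--Weil definition of $c_1$, and the orientation of the fiber integral along $X$. The hypotheses \eqref{e0} and \eqref{e1} play a numerical role here: the choice $\deg L_0\, =\, g-2$ together with $\deg TX\, =\, 2-2g$ arranges for the two surviving contributions $(g-2)\beta$ and $(1-g)\beta$ to combine into exactly $\pm\beta$, so that no unwanted nontrivial multiple of $c_1({\mathcal L}\, ,h)$ appears in the final answer.
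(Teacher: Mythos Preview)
Your proof is correct and follows essentially the same route as the paper: both invoke the Bismut--Gillet--Soul\'e curvature formula for the Quillen metric, expand the integrand $\operatorname{Td}\cdot\operatorname{ch}$ using the splitting $c_1(L,H)=p_1^*c_1(L_0,h_0)+p_2^*c_1({\mathcal L},h)$ and the one--dimensionality of $X$, and then evaluate the fiber integral via $\deg L_0 + \tfrac{1}{2}\deg TX = (g-2)+(1-g)=-1$. One small remark: the vanishing conditions \eqref{e0}--\eqref{e1} are not actually used in this curvature computation (they enter in the identification of $\mathcal Q$ with $R^1p_{2*}L$); what matters here, as you correctly identify, is purely the numerical choice $\deg L_0=g-2$.
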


\begin{proof}
The Chern connection on the holomorphic Hermitian line bundle $(L_0\, , h_0)$
(respectively, $(L\, , H)$) will be denoted by $\nabla^{L_0}$ (respectively,
$\nabla^L$). Let ${\mathcal K}(\nabla^{L_0})$ (respectively, ${\mathcal K}
(\nabla^L)$) be the curvature of $\nabla^{L_0}$ (respectively, $\nabla^L$). From
the definition of $H$ (see \eqref{H}) it follows immediately that
\begin{equation}\label{e2}
{\mathcal K}(\nabla^L)\, =\, p^*_1{\mathcal K}(\nabla^{L_0}) +p^*_2
{\mathcal K}(\nabla^{\mathcal L})\, .
\end{equation}
Let ${\mathcal K}(\omega_X)\,\in\, C^\infty(X;\, \Omega^{1,1}_X)$ be the curvature
of $TX$ for the K\"ahler form $\omega_X$.

A theorem due to Quillen and Bismut--Gilet--Soul\'e says that ${\mathcal K}
({\nabla}^Q)$ is given by the following fiber integral along $X$
\begin{equation}\label{e3}
{\mathcal K}({\nabla}^Q)\,=\, - \frac{1}{2\pi\sqrt{-1}}\left(\int_{(X\times M)
/M}({\mathcal K}(\nabla^L)+\frac{1}{2}{\mathcal K}(\nabla^L)^2)\wedge (1+
\frac{1}{2}p^*_1{\mathcal K}(\omega_X))\right)_2
\end{equation}
\cite[p. 51, Theorem 0.1]{BGS}, \cite{Qu}, where $(\beta)_2$ denote the component
of the differential form $\beta$ of degree two; note that $\frac{1}{4\pi\sqrt{-1}}
{\mathcal K}(\omega_X)$ is the Todd form on $X$ for the K\"ahler form
$\omega_X$ that represents the Todd class $\frac{1}{2} c_1(TX)$.
Using \eqref{e2}, the expression in \eqref{e3} reduces to
\begin{equation}\label{f}
2\pi\sqrt{-1}\cdot {\mathcal K}({\nabla}^Q)\,=\, - {\mathcal K}
(\nabla^{\mathcal L})\cdot \int_X ({\mathcal K}(\nabla^{L_0}) +
\frac{1}{2} {\mathcal K}(\omega_X))\, .
\end{equation}
Now note that
$$
\frac{1}{2\pi\sqrt{-1}}
\int_X ({\mathcal K}(\nabla^{L_0}) + \frac{1}{2} {\mathcal K}(\omega_X))\,=\,
\text{degree}(L_0)+ \frac{1}{2}\text{degree}(TX)\,=\, g-2+1-g\,=\,- 1\, .
$$
Using this, from \eqref{f} we conclude that
${\mathcal K}({\nabla}^Q)\,=\, {\mathcal K}(\nabla^{\mathcal L})$.
\end{proof}

The Hermitian structure $L_0$ and the K\"ahler form $\omega_X$ together
produce an inner product on $H^1(X,\,  L_0)$. This inner product defines
a Hermitian structure $h_\xi$ on the holomorphic line bundle $\xi$ in \eqref{e5}.
Note that the Chern connection on $\xi$ for $h_\xi$ is flat.

The Hermitian structure
$h$ on $\mathcal L$ and the Hermitian structure $h_\xi$ on $\xi$ together produce
a Hermitian structure $\widetilde h$ on ${\mathcal L}\otimes\xi$.

\begin{proposition}\label{prop2}
For the isomorphism $\tau$ in \eqref{e4}, there is a positive real number $t$ such
that $\tau^* \widetilde h\,=\, t\cdot h_Q$.
\end{proposition}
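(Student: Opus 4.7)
The plan is to reduce the identity of the two Hermitian metrics to a statement about pluriharmonic functions on the compact manifold $M$, using Proposition \ref{prop1} as the main input.

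First I would compute the curvature of the Chern connection on $({\mathcal L}\otimes\xi\, ,\widetilde h)$. Since $\widetilde h$ is a tensor product of Hermitian metrics, its Chern connection is the tensor product of the Chern connections of $h$ and $h_\xi$, and its curvature is the sum of the two individual curvatures. By construction of $h_\xi$ (the Chern connection for $h_\xi$ was noted to be flat), the curvature of $\widetilde h$ equals ${\mathcal K}(\nabla^{\mathcal L})$. Transporting $\widetilde h$ to $\mathcal Q$ via the holomorphic isomorphism $\tau$ of \eqref{e4}, the curvature of $\tau^*\widetilde h$ is again ${\mathcal K}(\nabla^{\mathcal L})$.

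Next, by Proposition \ref{prop1}, the Chern connection curvature of $h_Q$ equals ${\mathcal K}(\nabla^{\mathcal L})$ as well. Therefore $\tau^*\widetilde h$ and $h_Q$ are two Hermitian structures on the same holomorphic line bundle $\mathcal Q$ whose Chern connections have the same curvature. Writing $\tau^*\widetilde h = f\cdot h_Q$ for a smooth positive real-valued function $f$ on $M$, a standard local computation of the Chern connection gives $\partial\bar\partial \log f\,=\,0$ on $M$; that is, $\log f$ is pluriharmonic.

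Finally, I would invoke compactness of $M$: a pluriharmonic function on a compact connected complex manifold is constant (for example, it is harmonic for any Kähler metric on $M$, hence constant by the maximum principle on the compact connected space $M$). Thus $f$ equals some positive real constant $t$, giving $\tau^*\widetilde h\,=\,t\cdot h_Q$. I do not anticipate a substantial obstacle: the only nontrivial input is Proposition \ref{prop1}, which is already proved, and the flatness of $h_\xi$, which has already been recorded just above the statement.
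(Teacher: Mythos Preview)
Your proposal is correct and follows essentially the same route as the paper: write $\tau^*\widetilde h = e^{f}\cdot h_Q$, use Proposition~\ref{prop1} together with the flatness of $h_\xi$ to conclude that both metrics have the same Chern curvature so that $f$ is (pluri)harmonic, and invoke compactness and connectedness of $M$ to force $f$ to be constant. If anything, you are slightly more explicit than the paper in spelling out why $\tau^*\widetilde h$ has curvature ${\mathcal K}(\nabla^{\mathcal L})$.
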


\begin{proof}
There is a real valued $C^\infty$ function $f$ on $M$ such that
$$
\tau^* \widetilde h\,=\, \exp(f)\cdot h_Q\, .
$$
{}From Proposition \ref{prop1} it follows
the two holomorphic Hermitian line bundles $({\mathcal Q}\, , h_Q)$ and $({\mathcal Q}\, ,
\exp(f)\cdot h_Q)$ have the same curvature. This implies that $f$ is a harmonic
function. Since $M$ is compact and connected, any harmonic function on it is a constant
one.
\end{proof}

\begin{theorem}\label{thm1}
The two $G$--equivariant holomorphic Hermitian line bundles $({\mathcal L}\, , h)$
and $({\mathcal Q}\, , h_Q)$ are isomorphic.
\end{theorem}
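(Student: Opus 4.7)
The plan is to stitch together Lemma \ref{lem1} and Proposition \ref{prop2}, and then trivialize the auxiliary factor $\xi$ $G$-equivariantly. By Lemma \ref{lem1}, the map $\tau$ in \eqref{e4} is a $G$-equivariant holomorphic isomorphism $\mathcal{Q} \xrightarrow{\sim} \mathcal{L} \otimes \xi$. By Proposition \ref{prop2}, we have $\tau^*\widetilde{h} = t \cdot h_Q$ for some positive constant $t$. Rescaling $\tau$ by $t^{-1/2}$ (the scalar is $G$-invariant, as it is constant) produces a $G$-equivariant holomorphic \emph{isometry} $\widetilde{\tau} : (\mathcal{Q}, h_Q) \xrightarrow{\sim} (\mathcal{L} \otimes \xi, \widetilde{h})$.

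It then remains to produce a $G$-equivariant holomorphic Hermitian isomorphism $(\mathcal{L} \otimes \xi, \widetilde{h}) \xrightarrow{\sim} (\mathcal{L}, h)$, which amounts to showing that $(\xi, h_\xi)$ is $G$-equivariantly isomorphic to the standard trivial Hermitian line bundle on $M$. This is essentially tautological: by \eqref{e1}, the fiber $H^1(X, L_0)$ is one-dimensional, and by construction $G$ acts trivially on it (the actions on $X$ and $L_0$ being trivial), so $h_\xi$ reduces to a single positive real number on this line. Choosing a vector $v_0 \in H^1(X, L_0)$ with $h_\xi(v_0, v_0) = 1$, the assignment $m \mapsto (m, v_0)$ is a $G$-invariant nowhere-vanishing holomorphic section of $\xi$ of unit pointwise norm. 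This section defines the required $G$-equivariant holomorphic Hermitian trivialization of $\xi$, and tensoring with the identity on $\mathcal{L}$ gives the desired isomorphism $(\mathcal{L} \otimes \xi, \widetilde{h}) \xrightarrow{\sim} (\mathcal{L}, h)$.

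Composing this with $\widetilde{\tau}$ yields the theorem. The main conceptual obstacle — matching the Chern curvatures so that the two Hermitian structures differ only by a multiplicative constant — has already been resolved in Proposition \ref{prop1} via the fiber-integral formula of Bismut--Gillet--Soulé; nothing substantive remains beyond the trivialization step above.
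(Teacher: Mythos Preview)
Your proof is correct and follows essentially the same route as the paper: rescale $\tau$ by $t^{-1/2}$ using Lemma~\ref{lem1} and Proposition~\ref{prop2} to obtain a $G$--equivariant holomorphic isometry $({\mathcal Q},h_Q)\cong({\mathcal L}\otimes\xi,\widetilde h)$, and then trivialize $(\xi,h_\xi)$ $G$--equivariantly. The only difference is that the paper dispatches the last step with the phrase ``clearly isomorphic,'' whereas you spell out the choice of a unit vector in $H^1(X,L_0)$; this is a welcome elaboration, not a different argument.
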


\begin{proof}
Consider the isomorphism
$$
\frac{1}{\sqrt{t}}\cdot \tau\, :\, \mathcal Q\,
\longrightarrow\, {\mathcal L}\otimes\xi\, ,
$$
where $\tau$ is the isomorphism in \eqref{e4}, and $t$ is the constant in
Proposition \ref{prop2}. From Lemma \ref{lem1} and Proposition \ref{prop2}
it follows immediately that this is a $G$--equivariant holomorphic isomorphism
that takes the Hermitian structure $h_Q$ on $\mathcal Q$ to the Hermitian structure
${\widetilde h}$ on ${\mathcal L}\otimes\xi$.

The two $G$--equivariant holomorphic Hermitian line bundles $({\mathcal L}\, , h)$
and $({\mathcal L}\otimes\xi\, , {\widetilde h})$ are clearly isomorphic.
Therefore, the two $G$--equivariant holomorphic Hermitian line bundles $({\mathcal L}\, ,
h)$ and $({\mathcal Q}\, , h_Q)$ are isomorphic.
\end{proof}

\section*{Acknowledgements}
The author is grateful to the Harish--Chandra Research Institute for hospitality
while the work was carried out. The author acknowledges the support of the J. C.
Bose Fellowship.


\end{document}